\documentclass{mpi2015-cscpreprint}

\usepackage[american]{babel}
\usepackage{graphicx,subfig,float,pgfplots}
\usepackage{amssymb,amsthm,amsmath}
\usepackage{multicol}
\usepackage{makecell,tabularx}
\usepackage{relsize}

\newtheorem{theorem}{Theorem}[section]

\newtheorem{lemma}[theorem]{Lemma}
\newtheorem{proposition}{Proposition}

\newtheorem{remark}{Remark}[section]

\newtheorem*{theorem*}{Theorem}

\numberwithin{equation}{section}

\begin{document}

\title{Exponential Decay \\for a Boundary-Controlled \\Nonlinear Parabolic Reactor Model}
  
\author[$1$]{Y.~Yevgenieva}
\affil[$1$]{Max Planck Institute for Dynamics of Complex Technical Systems,
              39106 Magdeburg, Germany\\ \newline
              Institute of Applied Mathematics and Mechanics, National Academy of Sciences of Ukraine, \newline 84116 Sloviansk, Ukraine\authorcr
  \email{yevgenieva@mpi-magdeburg.mpg.de}, \orcid{0000-0002-1867-3698}}
  
\author[$2$]{A.~Zuyev}
\affil[$2$]{Max Planck Institute for Dynamics of Complex Technical Systems,
              39106 Magdeburg, Germany\\ \newline
              Institute of Applied Mathematics and Mechanics, National Academy of Sciences of Ukraine, \newline 84116 Sloviansk, Ukraine\authorcr
  \email{zuyev@mpi-magdeburg.mpg.de}, \orcid{0000-0002-7610-5621}}

\author[$2$]{C.~Prieur}
\affil[$2$]{Universit\'{e} Grenoble Alpes, CNRS, Grenoble-INP, GIPSA-lab, F-38000 Grenoble, France\authorcr
  \email{christophe.prieur@gipsa-lab.fr}, \orcid{0000-0002-4456-2019}}

\author[$3$]{P.~Benner}
\affil[$3$]{Max Planck Institute for Dynamics of Complex Technical Systems,
              39106 Magdeburg, Germany\authorcr
  \email{benner@mpi-magdeburg.mpg.de}, \orcid{0000-0003-3362-4103}}
  
\shorttitle{Exponential Decay...}
\shortauthor{Y.~Yevgenieva, A.~Zuyev, C.~Prieur, P.~Benner}
\shortdate{}
  
\keywords{exponential stability, decay rate estimates, boundary feedback control, nonlinear diffusion-reaction equations, chemical reaction model}

\msc{35B35, 93D15, 93D23, 93D05}
  
\abstract{We study an axial dispersion tubular reactor model governed by a nonlinear parabolic equation with Robin-type boundary conditions and boundary feedback control. We derive sufficient conditions for the exponential stability of the steady-state solution of the closed-loop system and provide an explicit estimate of the decay rate. In addition, numerical simulations are presented to illustrate the sharpness of the obtained decay rate for different choices of the feedback gain parameter.}

\novelty{The key contribution of our work is the following:
\begin{itemize}
    \item { Design of a stabilizing boundary control ensuring a state constraint for the axial dispersion tubular reactor by establishing the invariance of a region in the state space under the nonlinear dynamics. See Theorem \ref{th2} and Remark \ref{rem:invariance} below for more details.}
    
    \item { A sharp estimate of the exponential decay rate for solutions of the controlled reactor model is derived.}
    
    \item Numerical simulations are presented to illustrate the sharpness of the obtained theoretical stability estimates.
\end{itemize}}

\maketitle

\section{Introduction}\label{sec:intro}

This paper is dedicated to the stability analysis and the boundary stabilization of a tubular reactor, that is usually modeled as partial differential equation due to space-dependence of its dynamics. {While stability analysis via linearization is by now well established, relying in particular on semigroup theory and control methods for distributed parameter systems~\cite{HasWinDoc2020,WinDocLig2000}, the asymptotic analysis of the nonlinear system requires a more delicate treatment. In this work, we focus on the stabilization of the nonlinear system itself, without resorting to linearization. Nonlinear effects may substantially alter the closed-loop dynamics. In particular, they may prevent global stabilization and lead only to local asymptotic stability with a bounded region of attraction (cf.~\cite{lhachemi2023local}).}

An exponential decay rate estimate for a dispersed flow reactor model with saturated nonlinearity was obtained in our previous paper~\cite{YevZB2025} using Lyapunov's direct method. There it was shown by numerical simulations, that the derived theoretical estimate is not sharp. Consequently, the problem of constructing sharp theoretical decay estimates for this class of parabolic systems with boundary control has remained open. The present paper aims to fill this gap while also considering a more realistic model of the nonlinear reaction rate without saturation.

The literature on the stability of such parabolic nonlinear dynamics is not immense. In the paper~\cite{fursikov2001stabilizability}, the stabilizability of a quasilinear parabolic equation by means of boundary feedback control was studied, absolute stability conditions for semilinear parabolic PDEs under boundary control were derived in~\cite{hagen2006absolute}.
More recently, significant advances have been achieved through backstepping and Lyapunov-based techniques~(see, e.g., \cite{karafyllis2019small} and references therein).
Although these works establish powerful methodologies for boundary stabilization of linear, semilinear, and quasilinear parabolic systems, only limited attention has been devoted to nonlinear reaction models with physically motivated Robin-type boundary feedback laws.
The paper~\cite{lhachemi2020boundary} treats a linear reaction–diffusion model with Robin boundary conditions and delayed states, and although it proves exponential stability of the closed-loop system, it does not derive explicit analytical decay-rate formulas in terms of the physical PDE parameters.

The paper \cite{lhachemi2026boundary} considers a nonlinear parabolic equation with a nonlinear diffusion, and sliding-mode technique for the observation of the reaction-diffusion with a nonlinear term are developped in~\cite{mohet2023infinite}. As in \cite{HasWinDoc2020}, we assume that the nonlinear reaction term is Lipschitz continuous, preventing global exponential stability. However, we go deeper in the stability analysis, because we deal with the limit case of exponential stability as function of the boundary control gain.  

Exploiting observability and controllability properties of the linearized parabolic  equation (as introduced in the seminal paper \cite{sakawa1983feedback}), 
\cite{wang2025constructive} succeeds to solve the boundary stabilization problem of a semilinear parabolic equation by means of an observer-based dynamical controller. In this paper, inspired by \cite{lhachemi2025static} and \cite{YevZB2025}, we derive a static output feedback control candidate and a condition on the control gain to get the (local) exponential stability of the nonlinear closed-loop infinite-dimensional model. More specifically, by an appropriate control Lyapunov function, we state the relation between the speed of convergence (or divergence) with respect to the boundary control gain. Moreover, we show how the basin of attraction is related to the amplitude of the tuning parameter (see Theorem~\ref{prop1} below).

The paper is organized as follows. Section~\ref{sec:applications} contains a stability analysis of a nonlinear tubular reactor model. Section~\ref{sec:main_proof} collects the technical proofs of the main results and the statements of intermediate steps.
Section~\ref{sec:num} contains some numerical simulations illustrating our result on the reactor model together with a discussion on the choice of the control parameter. Some concluding remarks are given in 
Section~\ref{sec:con}.

\section{Main Results}\label{sec:applications}

We consider an isothermal mathematical model of an axially dispersed tubular reactor with a reaction of the type ``$A \to$ product''. The model is governed by a single nonlinear PDE with Danckwerts boundary conditions, where the control function enters through the boundary conditions. A similar, but coupled, model with two reactants was considered in \cite{AylAchLaa2007}, while non-isothermal models have been studied, for example, in \cite{HasWinDoc2020} and the references therein. For the extended survey of the models, we refer the reader to \cite{Doc2025}. The essential difference in our analysis lies in the treatment of the boundary control problem and in the study of the system behavior depending on the boundary control.

Consider the nonlinear parabolic equation with Robin-type (or Danckwerts) boundary conditions (see, e.g., \cite{NauMal1983,SH13}):
\begin{equation}\label{dftr}
\begin{aligned}
    &\frac{\partial C_A(x,t)}{\partial t}=D\frac{\partial^2C_A(x,t)}{\partial x^2} -v\frac{\partial C_A(x,t)}{\partial x}-kC_A(x,t)^n, 
\\ &C_A(0,t)=u(t)+\frac{D}{v}\frac{\partial C_A(x,t)}{\partial x}\Big|_{x=0}, \quad
\frac{\partial C_A(x,t)}{\partial x}\Big|_{x=l}=0,
\end{aligned}
\end{equation}
where $(x,t)\in[0,l]\times[0,\infty)$, $C_A(x,t)$ is the reactant $A$ concentration inside the reactor at distance $x$ from the inlet and at time $t$.
Here, $l>0$ denotes the length of the reactor tube, $n>0$ is the reaction order, $D>0$ is the axial dispersion coefficient, $v>0$ is the flow rate of the reaction stream, and $k>0$ is the reaction rate constant. The function $u(t)$  is the concentration of component $A$ in the inlet stream and represents the control input.

Let us first consider the steady-state problem:
\begin{equation}\label{eq:steady-state}
\begin{aligned}
    &D\overline{C}_A''(x) -v\overline{C}_A'(x) -k\overline{C}_A(x)^n=0, 
\qquad x\in[0,l],
\\ &\overline{C}_A(0)=\bar{u}+\tfrac{D}{v}\overline{C}_A'(0), \quad \overline{C}_A'(l)=0,
\end{aligned}
\end{equation}
where $\bar{u}=\text{const}>0$. From the economic and physical point of view, as well as by technical considerations, it is usually assumed that:
\begin{equation}\label{eq:constant_strip}
0\leqslant \overline{C}_A(x)\leqslant \bar{u}, \qquad x\in[0,l].
\end{equation}
Similar limitations will also be assumed for the function $C_A(x,t)$ later. Let us study the existence and uniqueness of the steady-state solution for the model.

\begin{lemma}\label{lem:steady-state}
Let $n>1$ and $\bar{u}\le\left(\frac{v^2}{4Dkn}\right)^\frac{1}{n-1}$. Then problem \eqref{eq:steady-state} has a unique solution $\overline{C}_A\in H^2(0,l)$ satisfying \eqref{eq:constant_strip}, where $H^2(0,l)$ denotes the corresponding Sobolev space.
\end{lemma}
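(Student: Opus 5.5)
We prove existence by a shooting argument on the second-order ODE, using the Robin condition at $x=0$ as a one-parameter family of initial data, and uniqueness by a comparison (maximum principle) argument. The smallness condition $\bar u\le (v^2/(4Dkn))^{1/(n-1)}$ is what makes the linearized operator nonoscillatory and gives uniqueness.

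\emph{Reformulation.} Set $f(c)=k\,|c|^{n-1}c$, which equals $kc^n$ for $c\ge0$. For $s\in[0,\bar u]$ let $y_s$ solve $Dy''-vy'-f(y)=0$ with
\begin{equation*}
y(0)=s,\qquad y'(0)=\tfrac{v}{D}(s-\bar u)\le 0 .
\end{equation*}
This is exactly the Robin condition $y(0)=\bar u+\tfrac{D}{v}y'(0)$. Since $f$ is locally Lipschitz for $n>1$, $y_s$ is locally defined and depends continuously on $s$. Any solution of \eqref{eq:steady-state} with values in $[0,\bar u]$ is a $C^2$, hence $H^2(0,l)$, function. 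We seek $s$ with $y_s'(l)=0$.

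\emph{Existence.} Take $s=\bar u$. Then $y'(0)=0$ and $y''(0)=f(\bar u)/D>0$, so $y$ increases on $(0,\delta)$. While $y'>0$ and $y>0$, the equation $(e^{-vx/D}y')'=e^{-vx/D}f(y)/D>0$ keeps $y'>0$, so $y'_{\bar u}(l)>0$. Now take $s=0$. Then $y(0)=0$ and $y'(0)<0$, so $y$ becomes negative immediately. While $y<0$, $f(y)<0$ forces $(e^{-vx/D}y')'<0$, so $y'$ stays negative and $y'_0(l)<0$.

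Let $s^*=\inf\{s: y_s'>0 \text{ somewhere on } (0,l]\}$, or equivalently apply the intermediate value theorem to $s\mapsto y_s'(l)$. To do this we need blow-up to be excluded and $y_s$ to remain in $[0,\bar u]$ for the relevant $s$. The monotonicity identity above does this. If $y'(0)\le 0$ and $y$ ever reached $y'=0$ with $y>0$, then $y''>0$ there, so $y'$ has at most one sign change, from negative to positive. Hence for $s\in(0,\bar u)$, on the interval where $y_s\ge0$, either $y_s$ decreases to a minimum and then increases, or it decreases throughout. Consequently $y_s'(l)=0$ with $y_s'<0$ on $(0,l)$ yields a solution with $0\le y\le s\le\bar u$, because it is decreasing.

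The delicate point is that the solution must stay nonnegative, so $s^*>0$ must hold. Continuity and the two endpoint signs give some $s^*\in(0,\bar u)$ with $y'_{s^*}(l)=0$. Nonnegativity then follows because a solution that became negative would satisfy $y'<0$ thereafter, contradicting $y'(l)=0$. On $[0,\bar u]$ the nonlinearity is bounded, so no blow-up occurs there.

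\emph{Uniqueness.} This is the main obstacle, and it is where the hypothesis enters. Let $y_1,y_2$ be two solutions in $[0,\bar u]$ and $w=y_1-y_2$. Then
\begin{equation*}
Dw''-vw'-q(x)w=0,\qquad q=\frac{f(y_1)-f(y_2)}{y_1-y_2}\in\bigl[0,\,kn\bar u^{n-1}\bigr]\subset\Bigl[0,\tfrac{v^2}{4D}\Bigr],
\end{equation*}
with homogeneous conditions $w(0)=\tfrac{D}{v}w'(0)$ and $w'(l)=0$.

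Since $q\ge0$, the maximum principle suffices. Suppose $w$ has a positive maximum. If the maximum is interior, then $w''\le0$ and $w'=0$, which contradicts $Dw''=qw\ge0$ unless $q\equiv0$ there; the strong maximum principle handles that case. If the maximum is at $x=l$, then $w'(l)=0$ contradicts the Hopf lemma. If the maximum is at $x=0$, then $w'(0)\le0$, while $w'(0)=\tfrac vD w(0)>0$, a contradiction. Hence $w\le0$, and by symmetry $w\equiv0$.

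If the pure maximum principle is delicate at the degenerate points, we fall back on the substitution $w=e^{vx/(2D)}z$. This gives $Dz''=(q-v^2/(4D))z$ with a nonpositive coefficient, which is nonoscillatory precisely under the hypothesis $\bar u^{n-1}\le v^2/(4Dkn)$. A Sturm-type energy argument on $z$ with the transformed Robin data then gives $z\equiv0$. We expect this transformed-problem argument to be the cleanest rigorous route, and it explains the form of the bound.
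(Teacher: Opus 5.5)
Your argument is correct in substance, but it takes a genuinely different route from the paper. The paper sets $y=C_A-\bar u$ and writes the steady-state problem as $\mathcal{B}y=f(y)$ with $\mathcal{B}=\mathcal{A}|_{\alpha=0}$. It then applies the Banach fixed-point theorem to $\Phi=\mathcal{B}^{-1}f$, combining the spectral bound $\|\mathcal{B}^{-1}\|\le 1/(-\lambda_0(0))<4D/v^2$ from Lemma~\ref{lem:dependelne_on_alpha} with the Lipschitz constant $L_f=kn\bar u^{n-1}$. The hypothesis on $\bar u$ is exactly what makes $\|\mathcal{B}^{-1}\|L_f<1$, so it is an artifact of the contraction method. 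In exchange, that method gives existence and uniqueness in one stroke and reuses the paper's spectral machinery.

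Your shooting and maximum-principle argument is more elementary. It yields the bounds $0\le\overline{C}_A\le\bar u$ directly from the construction, whereas the contraction argument tacitly needs $\Phi$ to map the strip into itself. It also never uses the hypothesis: existence relies only on the sign structure of $f$, and uniqueness only on $q\ge 0$. So you in fact prove the lemma for every $\bar u>0$.

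Two points need repair.

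\emph{Blow-up in the shooting step.} The shots $s=0$ and $s=\bar u$ leave $[0,\bar u]$ immediately and, since $n>1$, may blow up before $x=l$. So $s\mapsto y_s'(l)$ need not be defined on all of $[0,\bar u]$, and boundedness of $f$ on $[0,\bar u]$ does not cover these shots. The simplest fix is to replace $f$ by $f\circ\pi$, where $\pi$ is the projection onto $[-\bar u,\bar u]$. This is globally Lipschitz and has the sign of its argument, so:
\begin{itemize}
\item every shot exists on $[0,l]$;
\item your monotonicity arguments are unchanged;
\item the intermediate value theorem applies;
\item the resulting solution lies in $[0,\bar u]$, where the truncation is inactive.
\end{itemize}
Alternatively, make your $\inf$ construction precise using the open sets of shots that become negative or have $y'>0$ somewhere. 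You then show $0\notin\overline{\{s: y_s'>0\text{ somewhere}\}}$, and rule out $y_{s^*}'<0$ on $[0,l]$ by openness.

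\emph{Role of the hypothesis in uniqueness.} The hypothesis is not where uniqueness enters, and your fallback computation has a sign error. The substitution $w=e^{vx/(2D)}z$ gives $Dz''=\bigl(q+\tfrac{v^2}{4D}\bigr)z$, with $z'(0)=\tfrac{v}{2D}z(0)$ and $z'(l)=-\tfrac{v}{2D}z(l)$. Testing with $z$ yields
\[
-\tfrac{v}{2}\bigl(z(0)^2+z(l)^2\bigr)-D\int_0^l |z'|^2\,dx=\int_0^l\bigl(q+\tfrac{v^2}{4D}\bigr)z^2\,dx,
\]
so $z\equiv 0$ for any $\bar u$. In any case the fallback is unnecessary, because your maximum-principle proof is already rigorous. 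For instance, the identity $(e^{-vx/D}w')'=e^{-vx/D}qw/D$ handles the degenerate points where $q$ vanishes: at an interior positive maximum it forces $w$ to be constant, which the boundary condition at $x=0$ rules out.
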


Now, define the feedback control function $u$ by
\begin{equation}\label{control_1}
u(t)=\bar{u}+\alpha\,C_A(0,t), \qquad  \alpha\in\mathbb{R}.
\end{equation}

Consider the following initial condition for system \eqref{dftr}:
\begin{equation}\label{initial}
C_A(x,0)=C_0(x), \qquad  x\in [0,l],
\end{equation}
which satisfies the bound:
\begin{equation}\label{eq:initial_bound}
0\leqslant C_0(x)\leqslant M\varphi(x) \qquad\text{a.e. in }(0,l),
\end{equation}
for some $M>0$ and function $\varphi$ defined by
\begin{equation}\label{eq:phi}
\varphi(x)=-(x-l)^2+l^2+\tfrac{2Dl}{v(1-\alpha)}, \quad \text{for all }x\in[0,l].
\end{equation}
{ The choice of the specific constraint \eqref{eq:initial_bound} is justified in Remark~\ref{rem:invariance}.} Let us also introduce the following { critical} value:
\begin{equation}\label{eq:M*}
M^*:=\left(\tfrac{v^2}{4Dkn}\right)^\frac{1}{n-1}\tfrac{v(1-\alpha)}{l^2v(1-\alpha)+2Dl}.
\end{equation}

The following theorem is the main result of the paper and establishes the exponential convergence of the solutions to the closed-loop system \eqref{dftr}, \eqref{initial}, \eqref{control_1} towards the steady-state solution $\overline{C}_A$.
The theorem  employs the standard notion of mild solutions as defined, e.g., in \cite[Definition 5.1.4, Section 5]{CurZwa2020}).
\begin{theorem}\label{th2}
Let $n>1$, $0<M<M^*$, and let $\overline{C}_A$ be a steady-state solution defined by \eqref{eq:steady-state}. Then for any $\alpha<1$, the steady-state solution $\overline{C}_A(\cdot)$ of system \eqref{dftr}, \eqref{initial}, \eqref{control_1} is  exponentially stable. More precisely, there exist constants $K>0$ and $\omega>0$ such that for all initial conditions satisfying \eqref{eq:initial_bound}, the corresponding mild solution $C_A$ satisfies, for all $t\geqslant0$,
\begin{equation}\label{eq:exp_estimate_for_C}
\|C_A(\cdot,t)-\overline{C}_A\|_{L^2(0,l)}\leqslant K e^{-\omega\,t}\|C_0-\overline{C}_A\|_{L^2(0,l)},
\end{equation}
The precise value of $\omega$ is given in  Remark~\ref{rem3} below { and does not depend on $M$}.
\end{theorem}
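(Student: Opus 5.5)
The plan has three steps: show that the constraint \eqref{eq:initial_bound} is invariant under the closed-loop flow, use this to rewrite the equation for the deviation from the steady state as a linear equation with a nonnegative potential, and conclude with a weighted $L^2$ Lyapunov estimate.

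\textbf{Step 1 (invariance).} Write $C_A=\overline{C}_A+e$. Substituting \eqref{control_1} into \eqref{dftr} gives the closed-loop boundary condition $(1-\alpha)C_A(0,t)=\bar u+\frac{D}{v}\partial_xC_A(0,t)$, which is a genuine Robin condition because $\alpha<1$. I would first show that the order interval $0\le C_A(\cdot,t)\le M\varphi$ is positively invariant, via a comparison principle for the semilinear parabolic problem.
\begin{itemize}
\item $0$ is a subsolution, since $\bar u>0$ and the reaction term vanishes at $0$.
\item For $M\varphi$ to be a supersolution, I would check three conditions directly from \eqref{eq:phi}: $\varphi'(l)=0$; the boundary inequality at $x=0$, where $\varphi(0)=\frac{2Dl}{v(1-\alpha)}$ and $\varphi'(0)=2l$ give $(1-\alpha)\varphi(0)=\frac{D}{v}\varphi'(0)$; and the interior inequality $-2DM-vM\varphi'-kM^n\varphi^n\le0$, which holds since $\varphi'\ge0$.
\item The boundary inequality at $x=0$ must also absorb $\bar u$. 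This is where the relation between $\bar u$, $M$ and $M^*$ enters. I expect this to be the delicate part, and I would combine it with Lemma~\ref{lem:steady-state}, so that $\overline{C}_A$ lies in the same strip.
\end{itemize}
Mild solutions would be handled by approximating with classical solutions from smooth data and passing to the limit.

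\textbf{Step 2 (reduction to a linear problem).} By the mean value theorem,
\[
k(C_A^n-\overline{C}_A^n)=q(x,t)\,e, \qquad q=kn\,\xi^{n-1},
\]
with $\xi$ between $C_A$ and $\overline{C}_A$. By Step 1 both functions lie in $[0,M\varphi(l)]$. The choice $M<M^*$ gives $M\varphi(l)<(v^2/(4Dkn))^{1/(n-1)}$, hence
\[
0\le q< \frac{v^2}{4D}.
\]
(The lower bound uses $n>1$.) The deviation therefore satisfies
\[
e_t=De_{xx}-ve_x-q e,\qquad De_x(0)=v(1-\alpha)e(0),\qquad e_x(l)=0.
\]
Set $w=e^{-vx/(2D)}e$, which removes the convection. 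Then
\[
w_t=Dw_{xx}-\Big(\frac{v^2}{4D}+q\Big)w,
\]
with boundary conditions $w_x(0)=\frac{v}{D}\big(\tfrac12-\alpha\big)w(0)$ and $w_x(l)=-\frac{v}{2D}w(l)$.

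\textbf{Step 3 (Lyapunov estimate).} Take $V(t)=\int_0^l w^2\,dx=\int_0^l e^{-vx/D}e^2\,dx$. Integrating by parts,
\[
\dot V=-v\,w(l)^2-v(1-2\alpha)\,w(0)^2-2D\!\int_0^l w_x^2-\frac{v^2}{2D}\!\int_0^l w^2-2\!\int_0^l q w^2 .
\]
The $q$-term is nonpositive and can be dropped. This is why $\omega$ does not depend on $M$: the bound $M<M^*$ is used only for invariance and well-posedness.
\begin{itemize}
\item For $\alpha\le\tfrac12$ all boundary terms are nonpositive, and I obtain $\dot V\le-2\omega V$ with $\omega\ge\frac{v^2}{4D}$. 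Refining with the first eigenvalue of $-D\partial_{xx}$ under these Robin conditions gives a sharper rate.
\item For $\tfrac12<\alpha<1$ the term $w(0)^2$ has a bad sign. I would control it by the trace inequality $w(0)^2\le \varepsilon\|w_x\|^2+C_\varepsilon\|w\|^2$. Equivalently, $\omega$ is the principal eigenvalue of $-D\partial_{xx}+\frac{v^2}{4D}$ with the above Robin conditions, which I would characterize through the transcendental equation for $\tan(\mu l)$.
\end{itemize}
I expect this spectral bound to be the main obstacle, because positivity of $\omega$ must be shown for every $\alpha<1$.

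Finally, the norm $\sqrt V$ is equivalent to the $L^2$ norm of $e$, since $e^{-vl/D}\|e\|^2\le V\le\|e\|^2$. This yields \eqref{eq:exp_estimate_for_C} with $K=e^{vl/(2D)}$.
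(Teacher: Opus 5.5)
\textbf{Comparison with the paper.} Your Steps~2--3 are essentially the paper's argument. You use the same weighted functional $V=\int_0^l e^{-vx/D}e^2\,dx$, the same use of $q\ge0$ to discard the reaction term, and you obtain the same $\omega=-\lambda_0(\alpha)$ and $K=e^{vl/(2D)}$. Your substitution $w=e^{-vx/(2D)}e$ only replaces the paper's $P$-self-adjointness and eigenfunction expansion.

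\textbf{The gap: Step~1 cannot be carried out.} This is not a delicate point that some relation between $\bar u$, $M$ and $M^*$ could settle. Since $(1-\alpha)\varphi(0)=\frac{D}{v}\varphi'(0)$, the supersolution inequality at the inlet, $(1-\alpha)M\varphi(0)-\frac{D}{v}M\varphi'(0)\ge\bar u$, reads $0\ge\bar u$. This is false for every $M>0$.

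In fact the bound $C_A\le M\varphi$ is not invariant. Take $C_0=M\varphi$. The difference $C_A-M\varphi$ starts at $0$ and is driven by the boundary influx $v\bar u>0$ at $x=0$, against only a bounded interior sink. So it becomes positive near $x=0$ for small $t>0$.

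The paper's proof of Lemma~\ref{lem:invariance} contains the same defect, hidden in the claim $\xi_{\max}=M\varphi-\overline{C}_A\in D(\mathcal{A})$. Since $M\varphi\in D(\mathcal{A})$, this would require $\overline{C}_A\in D(\mathcal{A})$. For the closed-loop equilibrium, which is the one your Step~2 uses, that means $\bar u=0$. For the same reason, your assertion in Step~2 that $C_A$ and $\overline{C}_A$ take values in $[0,M\varphi(l)]$ is unsupported.

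\textbf{How to repair it.} Step~3 never needs this upper bound. It only needs a global mild solution and $q\ge0$, i.e.\ $C_A\ge0$ together with $\overline{C}_A\ge0$. Use the ordered pair $0$ and the constant $c=\max\{M\varphi(l),\,\bar u/(1-\alpha)\}$. This constant is a supersolution ($kc^n\ge0$ in the interior, $(1-\alpha)c\ge\bar u$ at $x=0$, zero slope at $x=l$), and it dominates $C_0$. This gives $0\le C_A\le c$, a reaction term that is Lipschitz on $[0,c]$, and hence global well-posedness. The bound $q<v^2/(4D)$ and the hypothesis $M<M^*$ play no role in the decay estimate.

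\textbf{Your anticipated obstacle for $\tfrac12<\alpha<1$.} A generic trace inequality will not give positivity for $\alpha$ close to $1$. The exact form degenerates at $\alpha=1$, where $\lambda_0=0$. Instead, undo the substitution:
\[
\dot V=-2D\int_0^l e^{-vx/D}e_x^2\,dx-2v(1-\alpha)e(0)^2-2\int_0^l q\,e^{-vx/D}e^2\,dx .
\]
Its linear part is $2\langle\mathcal{A}e,e\rangle_P$, exactly as computed in the paper's proof of Proposition~\ref{prop1}. For $\alpha<1$ this quadratic form vanishes only at $e\equiv0$. The compact embedding $H^1(0,l)\hookrightarrow C[0,l]$ then shows that the minimum of the associated Rayleigh quotient, which equals $-\lambda_0(\alpha)$, is attained and hence strictly positive. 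The transcendental equations of Lemma~\ref{lem:dependelne_on_alpha} are needed only to identify $\omega$ explicitly as in Remark~\ref{rem3}.
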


\section{Proof of the Main Results}\label{sec:main_proof}

\subsection{System in deviations}\label{sec:system_in_deviations}

{ We first reformulate problem \eqref{dftr}, \eqref{initial}, \eqref{control_1} in terms of deviations from the steady-state solution $\overline{C}_A$ by introducing the function $\xi(t)= C_A(\cdot,t)-\overline{C}_A(\cdot)$ and rewriting the system in an abstract form:}
%
\begin{equation}\label{eq1.5}
\begin{aligned}
 \dot{{\xi}}(t)&=\mathcal{A}\,{\xi}(t)+r({\xi}(t)),\quad t\in[0,\infty),
 \\\xi(0)&=\xi_0\in X_0^M.   
\end{aligned}
\end{equation}
Here, the linear operator $\mathcal{A}:D(\mathcal{A})\to X$ is defined by
\begin{equation}\label{opA}
    \mathcal{A}:{\xi}\in D(\mathcal{A}) \mapsto \mathcal{A}{\xi} =
            (D\xi'' -v\xi')\in X,
\end{equation}
where $X=L^2(0,l)$ and, with some $\alpha\in\mathbb{R}$,
\begin{equation*}
D(\mathcal{A})=
\left\{\xi\in H^2(0,l)\,\middle|\,
\begin{array}{l}
(1-\alpha)\xi(0)=\frac{D}{v}\xi'(0),\\
\xi'(l)=0
\end{array}
\right\}.
\end{equation*}
The nonlinear operator $r:X_0^M\subset X\to X$ in~\eqref{eq1.5} is represented by 
\begin{equation}\label{eq3.6}
    r({\xi})=k\overline{C}_A^n-k (\xi+\overline{C}_A)^n, \qquad\xi\in X_0^M,
\end{equation}
where $X_0^M\subset X$ is the subset defined by
\begin{equation}\label{eq:X_0}
X_0^M = \{\xi\in X \,|\, 0\leqslant\xi(x)+\overline{C}_A(x)\leqslant M \varphi(x)\;\text{a.e.\,in}\,(0,l)\},
\end{equation}
with $M>0$ and $\varphi$ defined by \eqref{eq:phi}.
Observe that if $\alpha<1$, then $\varphi(x)>0$ for all $x\in[0,l]$. Consequently, $X_0^M$ contains an open neighborhood of zero with respect to the $L^\infty(0,l)$ norm. The analysis of trajectories constrained by $X_0^M$ is similar to the regional stability concept~\cite{zerrik2003regional}; however, the class of nonlinear parabolic models considered here was not addressed in the aforementioned paper.

We now state the following result for system \eqref{eq1.5}.

\begin{proposition}\label{prop1}
Let $n>1$, and $M<M^*$. Then for any $\alpha<1$, the  equilibrium $\xi= 0$ of the closed-loop system \eqref{eq1.5} is exponentially stable. More precisely, there exists a constant $K>0$ such that, for each $\xi_0\in X_0^M$, the corresponding solution $\xi(t)$ satisfies
\begin{equation}\label{eq3.8}
\|\xi(t)\|_X\leqslant K e^{\lambda_0(\alpha)t}\|\xi_0\|_X \quad\text{for all}\;t\geqslant0,
\end{equation}
where $\lambda_0(\alpha)=\sup_{\lambda\in\sigma(\mathcal{A})} \operatorname{Re}(\lambda)<0$.
\end{proposition}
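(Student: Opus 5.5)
The plan is to combine three ingredients: invariance of the constraint set $X_0^M$ under the closed-loop flow, a sign property of the nonlinearity $r$ on $X_0^M$, and a symmetrization of $\mathcal{A}$ by an exponential weight. The weight turns the problem into an energy estimate whose rate is exactly the top of the spectrum of $\mathcal{A}$.

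\emph{Step 1 (well-posedness and invariance).} I will check that $\mathcal{A}$ generates a $C_0$-semigroup on $X$; it is a Sturm--Liouville operator with Robin/Neumann conditions. On the order interval defined by $0\le \xi+\overline{C}_A\le M\varphi$, the map $r$ is globally Lipschitz, so a local mild solution exists. To show that the trajectory stays in $X_0^M$ I will use a comparison (maximum) principle for the parabolic problem~\eqref{dftr} with the feedback~\eqref{control_1}, that is, with the boundary condition $(1-\alpha)C_A(0,t)=\bar u+\frac{D}{v}\partial_x C_A(0,t)$.
\begin{itemize}
\item The function $0$ is a subsolution, since $\bar u\ge 0$ and the reaction term vanishes at $0$.
\item For the supersolution $M\varphi$, note that $\varphi''=-2$ and $\varphi'=2(l-x)\ge 0$, so $D(M\varphi)''-v(M\varphi)'\le -2DM$.
\item At $x=0$ we have $(1-\alpha)\varphi(0)=\frac{2Dl}{v}=\frac{D}{v}\varphi'(0)$, so the linear part of the boundary condition is matched exactly. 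At $x=l$, $\varphi'(l)=0$.
\item The remaining requirements are $\bar u\le$ the boundary slack and $\overline{C}_A\le M\varphi$. Both should follow from $M<M^*$, the bound $\max\varphi=l^2+\frac{2Dl}{v(1-\alpha)}$, and Lemma~\ref{lem:steady-state}, possibly after adjusting the argument via the sign of $-k(M\varphi)^n$.
\end{itemize}
This step is where I expect the main technical obstacle: matching the boundary inequality at $x=0$ with the inhomogeneity $\bar u$, and justifying the comparison principle for mild rather than classical solutions. For the latter I would approximate by smooth data and pass to the limit.

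\emph{Step 2 (weighted energy estimate).} Set $w(x)=e^{-vx/D}$ and $\langle f,g\rangle_w=\int_0^l f g\,w\,dx$. Since $Dw'=-vw$, we have $D\xi''-v\xi'=\frac{D}{w}(w\xi')'$. Integrating by parts with the boundary conditions in $D(\mathcal{A})$ gives
\[
\langle \mathcal{A}\xi,\xi\rangle_w=-D\int_0^l w\,(\xi')^2\,dx-v(1-\alpha)\,\xi(0)^2 .
\]
Hence $\mathcal{A}$ is self-adjoint in $(X,\langle\cdot,\cdot\rangle_w)$ with compact resolvent, and its spectrum is real and discrete. For $\alpha<1$ the form is negative definite, so
\[
\lambda_0(\alpha)=\max\sigma(\mathcal{A})=\sup_{\xi\ne0}\frac{\langle\mathcal{A}\xi,\xi\rangle_w}{\|\xi\|_w^2}<0 .
\]

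\emph{Step 3 (sign of the nonlinearity and conclusion).} On $X_0^M$ both $\xi+\overline{C}_A$ and $\overline{C}_A$ are nonnegative, so the mean value theorem gives
\[
r(\xi)\,\xi=-kn\,\theta^{n-1}\xi^2\le 0
\]
pointwise, for some intermediate $\theta\ge0$. For strong solutions, and then for mild solutions by density and continuous dependence, this yields
\[
\tfrac{d}{dt}\|\xi\|_w^2=2\langle\mathcal{A}\xi,\xi\rangle_w+2\langle r(\xi),\xi\rangle_w\le 2\lambda_0(\alpha)\|\xi\|_w^2 .
\]
By Gronwall, $\|\xi(t)\|_w\le e^{\lambda_0 t}\|\xi_0\|_w$. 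Finally, the equivalence $e^{-vl/D}\|\cdot\|_X^2\le\|\cdot\|_w^2\le\|\cdot\|_X^2$ gives~\eqref{eq3.8} with $K=e^{vl/(2D)}$, and this constant is independent of $M$.
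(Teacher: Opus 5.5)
Your Steps~2 and~3 are the paper's argument: the weight $e^{-vx/D}$ (the paper's operator $P$), self-adjointness of $\mathcal{A}$ in the weighted inner product, and the bound $\langle\mathcal{A}\xi,\xi\rangle_P\le\lambda_0\langle\xi,\xi\rangle_P$. You obtain that bound from the Rayleigh quotient, where the paper uses the eigenfunction expansion. The rest also matches: the sign of $\langle r(\xi),\xi\rangle_P$, Gronwall, and $K=e^{vl/(2D)}$. Deriving $\lambda_0<0$ directly from the negative definite form is a clean shortcut.

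The gap is in Step~1, which you rightly flagged, but the resolution you hope for is impossible. You computed that $(1-\alpha)M\varphi(0)=\frac{D}{v}M\varphi'(0)$, so the boundary slack is exactly zero. The supersolution inequality at $x=0$ for the $C_A$-problem, $(1-\alpha)M\varphi(0)-\frac{D}{v}M\varphi'(0)\ge\bar u$, therefore reads $0\ge\bar u$. This is false for $\bar u>0$ and does not involve $M$, so it cannot follow from $M<M^*$. Nor can the interior term $-k(M\varphi)^n$ compensate: testing the weak inequality with nonnegative functions concentrated at $x=0$ isolates the boundary term. So $M\varphi$ is not a supersolution, and comparison does not give invariance of $X_0^M$. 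In fact invariance fails. For $C_0=M\varphi$, the inflow $v\bar u$ at $x=0$ makes $C_A(0,t)-M\varphi(0)$ positive of order $\sqrt{t}$, while the interior terms in the equation for $C_A-M\varphi$ contribute only $O(t)$.

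You cannot repair this by following the paper either. Lemma~\ref{lem:invariance} asserts $M\varphi-\overline{C}_A\in D(\mathcal{A})$, which fails for $\bar u>0$ for the same reason: $\varphi$ satisfies the homogeneous Robin condition at $x=0$ exactly, while $\overline{C}_A$ does not. Your other listed requirement, $\overline{C}_A\le M\varphi$, is a lower bound on $M$ that $M<M^*$ cannot supply, but it is not needed.

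The fix is easy because Step~3 never uses the upper bound. The sign of $r(\xi)\xi$ needs only $\xi+\overline{C}_A\ge 0$ and $\overline{C}_A\ge 0$, and $0$ is a genuine subsolution. For global existence, use $M\varphi+\frac{\bar u}{1-\alpha}$ as the supersolution. The interior inequality still holds, $\varphi'(l)=0$, and the slack at $x=0$ is now exactly $\bar u$. The order interval $0\le C_A\le M\varphi+\frac{\bar u}{1-\alpha}$ contains every $C_0$ satisfying \eqref{eq:initial_bound}, and $r$ is Lipschitz on it. This gives a global nonnegative mild solution, and Steps~2--3 then yield \eqref{eq3.8} with the same $K$ for every $\xi_0\in X_0^M$, without invariance of $X_0^M$ itself.
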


It is clear that Theorem~\ref{th2} follows directly from Proposition~\ref{prop1}, and the remainder of this section is devoted to the proof of Proposition~\ref{prop1}. Subsections~\ref{sec:auxiliary} and \ref{sec:operator_A} collect the auxiliary results needed for the argument, while Subsection~\ref{sec:proof} provides the proof of the proposition.

\subsection{Well-posedness and invariance of $X_0^M$}\label{sec:auxiliary}

The following lemma establishes conditions for the well-posedness, nonnegativity, and boundedness of solutions to system \eqref{dftr}, \eqref{initial}, \eqref{control_1} under condition \eqref{eq:initial_bound}. It follows from the positive invariance of the set $X_0^M$, defined in \eqref{eq:X_0}, for the system \eqref{eq1.5}.

\begin{lemma}\label{lem:invariance}
Let $n>1$ and $\alpha<1$.  Then, for any $M>0$ and any $\xi_0$ in $X_0^M$, problem \eqref{eq1.5} admits a unique mild solution $\xi\in C([0,\infty);X_0^M)$.
\end{lemma}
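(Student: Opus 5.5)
We prove Lemma~\ref{lem:invariance} by truncating the nonlinearity so that it becomes globally Lipschitz, solving the truncated problem by the standard theory, and then showing via comparison that the truncated solution never leaves $X_0^M$. Inside $X_0^M$ the truncation is inactive, so this solution is the desired mild solution of \eqref{eq1.5}.

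\textbf{Step 1 (the linear part).} We check that $\mathcal{A}$ generates an analytic, positive $C_0$-semigroup on $X$. With the weight $\rho(x)=e^{-vx/D}$, the operator $\mathcal{A}\xi=D\rho^{-1}(\rho\xi')'$ is a Sturm--Liouville operator. On $D(\mathcal{A})$ the boundary terms give
\[
\langle \mathcal{A}\xi,\xi\rangle_{L^2_\rho}=-D\int_0^l\rho|\xi'|^2\,dx-v(1-\alpha)|\xi(0)|^2 ,
\]
so for $\alpha<1$ the operator is self-adjoint and dissipative in the equivalent weighted space $L^2_\rho(0,l)$. Hence $\mathcal{A}$ generates an analytic contraction semigroup there, and a $C_0$-semigroup in $X$. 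Positivity of the semigroup follows from the Beurling--Deny criterion, since $\langle\mathcal{A}\xi^+,\xi^-\rangle$-type computations with $\xi^\pm\in H^1$ respect the Robin condition. Equivalently, one can use the weak maximum principle for Robin data with positive coefficient $v(1-\alpha)/D$.

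\textbf{Step 2 (truncation and local Lipschitz property).} In the original variable $C=\xi+\overline{C}_A$, the nonlinearity is $-kC^n$. We replace it by $f(C)=-k\,(\min\{\max\{C,0\},\,M\|\varphi\|_\infty\})^n$. Since $n>1$, this $f$ is globally Lipschitz on $\mathbb R$, so the corresponding $\tilde r$ is globally Lipschitz on $X$. The standard fixed point argument for semilinear equations (\cite{CurZwa2020}, Chapter~5) then gives a unique global mild solution $\xi\in C([0,\infty);X)$ of the truncated problem.

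\textbf{Step 3 (invariance: the main obstacle).} It remains to show $0\le C(\cdot,t)\le M\varphi$ for all $t$. Then $\tilde r=r$ along the trajectory, the truncated solution is a mild solution of \eqref{eq1.5}, and uniqueness follows from the local Lipschitz bound of $r$ on $X_0^M$, which is bounded in $L^\infty$.

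For the lower bound, note that $C$ solves $C_t=DC_{xx}-vC_x-c(x,t)C$ with $c=k\,\min\{C_+,\dots\}^{n}/C\ge0$ bounded. The boundary condition is $(1-\alpha)C(0)-\tfrac{D}{v}C_x(0)=\bar u\ge0$ and $C_x(l)=0$. Testing with $C^-$ (for smooth data, then by density) gives $\tfrac{d}{dt}\|C^-\|_\rho^2\le0$, hence $C\ge0$.

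For the upper bound we show that $M\varphi$ is a supersolution. A direct computation gives $D(M\varphi)''-v(M\varphi)'=-2DM+2vM(x-l)\le -2DM<0$ on $[0,l]$ and $(M\varphi)'(l)=0$. At $x=0$ we have $\varphi(0)=\tfrac{2Dl}{v(1-\alpha)}$ and $\varphi'(0)=2l$, so the boundary operator gives $(1-\alpha)M\varphi(0)-\tfrac Dv M\varphi'(0)=0$.

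This value $0$ is smaller than $\bar u$, so the naive comparison fails. This boundary mismatch is the step I expect to be hardest. I would try to resolve it using the steady state. Set $w=C-\overline{C}_A$ and compare it with the supersolution $M\varphi-\overline{C}_A$; I expect this reduces to using $\overline{C}_A\le\bar u$ together with the interior slack $-2DM$.

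If this fails, I would absorb the discrepancy as follows. The interior slack $2DM$ dominates the reaction term only when $kC^n\le 2DM$, and this is where the smallness of $M$ (or the truncation level) enters. Then, testing $(C-M\varphi)^+$ against the equation and using that its boundary flux sign is controlled by $\bar u$ versus $(1-\alpha)M\varphi(0)$, one obtains $\tfrac{d}{dt}\|(C-M\varphi)^+\|_\rho^2\le0$. It may be necessary to verify that the hypotheses implicitly require $\bar u\le (1-\alpha)M\varphi(0)$, i.e.\ that $\overline{C}_A\le M\varphi$, which is consistent with $X_0^M$ containing $0$.

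Given the two bounds, continuity in $X$ of the solution and closedness of $X_0^M$ in $X$ yield $\xi\in C([0,\infty);X_0^M)$.
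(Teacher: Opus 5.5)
Your Steps 1--2 and the lower bound $C\ge 0$ are sound, and you located the difficulty precisely. The upper bound in Step 3, however, is a genuine gap that none of your suggested repairs can close, because with $X_0^M$ as defined the bound $C\le M\varphi$ actually fails whenever $\bar u>0$.

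For the deviation system to carry the homogeneous condition of $D(\mathcal{A})$, the steady state must satisfy $(1-\alpha)\overline{C}_A(0)-\frac{D}{v}\overline{C}_A'(0)=\bar u$, whereas $M\varphi$ gives $0$ there. Comparing $C-\overline{C}_A$ with $M\varphi-\overline{C}_A$ is the same comparison shifted by $\overline{C}_A$, so it has the same deficit $-\bar u$. Your second idea misplaces the difficulty. The interior needs no smallness of $M$, since the reaction is a sink and $D(M\varphi)''-v(M\varphi)'-kC^n\le -2DM$ for every $M>0$. The obstruction sits entirely at $x=0$: for $w=C-M\varphi$ the weighted energy identity contains the boundary term $v\bar u\,w^+(0)-v(1-\alpha)\,w^+(0)^2$, which is positive for $0<w^+(0)<\bar u/(1-\alpha)$.

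This is not a technical artifact. Take $C_0=M\varphi$, which satisfies the constraint. Then $w(\cdot,0)=0$, and $w$ is driven by the boundary influx $\bar u$ plus an interior source bounded in $L^\infty$. Heat-kernel asymptotics give $w(0,t)\ge c\,\bar u\sqrt{t}-Ct>0$ for small $t>0$, so the trajectory leaves $X_0^M$ immediately. This works for every $M>0$, so no compatibility condition such as $\bar u\le(1-\alpha)M\varphi(0)$ can rescue it.

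The paper proceeds differently. It checks quasi-monotonicity and Lipschitz continuity of $r$ on $X_0^M$ and invokes the order-interval theorem of Arendt--Daners with the pair $\xi_{\min}=-\overline{C}_A$, $\xi_{\max}=M\varphi-\overline{C}_A$. The subsolution is fine: its boundary operator equals $-\bar u\le 0$, which is the correct sign. But the assertion $\xi_{\max}\in D(\mathcal{A})$ is false, since $(1-\alpha)\xi_{\max}(0)-\frac{D}{v}\xi_{\max}'(0)=-\bar u<0$, the wrong sign for a weak supersolution. So the paper's argument has exactly the gap you found.

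The fix is to take $M\varphi$ itself as the supersolution in the deviation variable. Indeed $M\varphi\in D(\mathcal{A})$, and $-\mathcal{A}(M\varphi)-r(M\varphi)=2DM+2vM(l-x)+k[(\overline{C}_A+M\varphi)^n-\overline{C}_A^n]\ge 0$. The invariant set should therefore be $-\overline{C}_A\le\xi\le M\varphi$, i.e.\ $0\le C\le \overline{C}_A+M\varphi$. This is also consistent with the initial data $\xi_0=\mu M^*\varphi$ used in the simulations.

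With that set your own route closes. Truncate at $\|\overline{C}_A+M\varphi\|_\infty$ and set $w=C-\overline{C}_A-M\varphi$. The boundary operator at $x=0$ now vanishes, so the boundary term is $-v(1-\alpha)w^+(0)^2\le 0$. By monotonicity of $s\mapsto s^n$, the interior source is $\le -2DM$ on $\{w>0\}$. Your truncation-plus-energy argument then gives a self-contained alternative to citing the Arendt--Daners theorem.
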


\begin{proof}

{ It is easy to show that the nonlinearity $r$, defined by \eqref{eq3.6}, 
is quasi-decreasing in $\xi$ for $\xi+\overline{C}_A\ge 0$ 
and $n>1$ (see \cite{AreDan2025} for the definition of quasi-monotonicity). 
Moreover, in the case $n>1$, $\alpha<1$, and $M>0$, it is also Lipschitz 
continuous on $X_0^M$. Indeed, for all $\xi_1,\xi_2\in X_0^M$, we obtain}
\begin{equation*}
\begin{aligned}
\|r(\xi_1)-r(\xi_2)\|_X&\leqslant \sup_{\xi\in X_0^M}
kn\big|\overline{C}_A(x)+\xi(x)\big|^{n-1}\|\xi_1-\xi_2\|_X
\\&\leqslant kn\sup_{x\in[0,l]}\big(M\varphi(x)\big)^{n-1}
\|\xi_1-\xi_2\|_X \leqslant L_r\,\|\xi_1-\xi_2\|_X,
\end{aligned}
\end{equation*}
with $L_r=kn\,\Bigl(M\,\tfrac{l^2v(1-\alpha)+2Dl}{v(1-\alpha)}\Bigr)^{n-1}$. For a detailed analysis, we refer the reader to \cite{YevZB2025}. 

{ Having established quasi-monotonicity and Lipschitz continuity of $r$, 
we apply \cite[Theorem~1.2]{AreDan2025} to conclude that problem 
\eqref{eq1.5} admits a unique mild solution $\xi$ satisfying}
\begin{equation*}
\xi_{\min}(x)\leqslant \xi(x,t)\leqslant \xi_{\max}(x) 
\qquad \text{a.e. in }(0,l),\ \forall\, t\geqslant 0,
\end{equation*}
where $\xi_{\min}$ and $\xi_{\max}$ { form an ordered pair of weak sub- and super-solutions of a steady-state problem for \eqref{eq1.5} (see \cite[Definition~1.1]{AreDan2025}).}

It is easy to check that $\xi_{\min}(x)\equiv -\overline{C}_A(x)$ is a sub-solution of problem \eqref{eq1.5}, since $\overline{C}_A$ solves problem \eqref{eq:steady-state}. 

Now, we prove that $\xi_{\max}(x)=M\varphi(x)-\overline{C}_A(x)$ ($\varphi$ is defined by \eqref{eq:phi}) is a super-solution of problem \eqref{eq1.5} for any $M>0$. 
Note, that the function $\varphi(x)$ is increasing on $[0,l]$, since $\varphi'(x)=2(l-x)\geqslant0$ for all $x\in[0,l]$, and therefore $\varphi(x)\geqslant \varphi(0)=\frac{2Dl\,a}{v(1-\alpha)}$.
Furthermore, since $\overline{C}_A$ is a solution of \eqref{eq:steady-state}, it is easy to check that $\xi_{\max}\in D(\mathcal{A})$ and  $-\mathcal{A}\xi_{\max}-r(\xi_{\max})\ge 0$.
This proves that $\xi_{\max}$ is a super-solution of the problem.
\end{proof}

\begin{remark}\label{rem:invariance}
{ We note that the function $\varphi$ has been specifically chosen as a super-solution of the steady-state problem corresponding to~\eqref{eq1.5}. The parameter $M$ is then introduced to control the ''size'' of the set $X_0^M$.} In \cite{AylAchLaa2007}, a similar approach is used to prove the existence and uniqueness of solutions. Namely, the strip $0\le C_A \le \bar{C}=\mathrm{const}$ is introduced, and its invariance with respect to the linear semigroup is established. In contrast, the set $X_0^M$ introduced here is invariant with respect to the nonlinear semigroup generated by system \eqref{eq1.5}.
\end{remark}

\subsection{Spectrum of the operator $\mathcal{A}$}\label{sec:operator_A}

In the next two lemmas, we study the dependence of the spectrum of the operator $\mathcal{A}$ on $\alpha$.

\begin{lemma}\label{lem:spectrum_of_A}
The spectrum of the operator $\mathcal{A}$ defined by \eqref{opA} is real and discrete, and has the following form:
\begin{equation}\label{eq:eigenvalues}
\lambda_k(\alpha) = -\tfrac{v^2}{4D} - \theta_k(\alpha),
\end{equation}
where the values $\theta_k$, $k\in\mathbb{Z}^+$, depend on $\alpha$.
\end{lemma}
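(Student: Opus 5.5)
The plan is to reduce $\mathcal{A}$ to a self-adjoint Sturm--Liouville operator by a Liouville-type transformation, and then read off the spectrum from an explicit characteristic equation. Define the weight $\rho(x)=e^{-vx/D}$, so that $D\xi''-v\xi' = \rho^{-1}(D\rho\xi')'$. In the weighted space $L^2_\rho(0,l)$, whose norm is equivalent to that of $X$ because $\rho$ is bounded above and below on $[0,l]$, the operator $\mathcal{A}$ is symmetric. For $\xi,\eta\in D(\mathcal{A})$, integration by parts gives
\begin{equation*}
\int_0^l \rho\,\mathcal{A}\xi\,\bar\eta\,dx=-D\int_0^l\rho\,\xi'\bar\eta'\,dx - v(1-\alpha)\,\xi(0)\bar\eta(0).
\end{equation*}
Here we used $\xi'(l)=0$ and $D\xi'(0)=v(1-\alpha)\xi(0)$. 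The right-hand side is symmetric in $\xi,\eta$. Since the domain consists of $H^2$ functions with separated boundary conditions, the resolvent is compact: it is given by an integral operator with continuous Green's kernel, via the standard two-point BVP argument. Hence $\mathcal{A}$ is self-adjoint with compact resolvent in $L^2_\rho$. Consequently $\sigma(\mathcal{A})$ is real, discrete, and consists of simple eigenvalues accumulating only at $-\infty$. The spectrum is a similarity invariant, so this also holds in $X$.

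To obtain the form \eqref{eq:eigenvalues}, substitute $\xi(x)=e^{vx/(2D)}\psi(x)$ into $D\xi''-v\xi'=\lambda\xi$. This yields $D\psi''=(\lambda+\tfrac{v^2}{4D})\psi$. Setting $\theta=-\lambda-\tfrac{v^2}{4D}$ gives $\psi''+\tfrac{\theta}{D}\psi=0$. The boundary conditions transform to
\begin{equation*}
D\psi'(0)=\bigl(v(1-\alpha)-\tfrac v2\bigr)\psi(0),\qquad \psi'(l)+\tfrac{v}{2D}\psi(l)=0,
\end{equation*}
which are again of Robin type. Writing $\theta=D\mu^2$ with $\mu>0$ (or $\mu=i\nu$ if $\theta<0$), take $\psi=\cos(\mu(l-x))-\tfrac{v}{2D\mu}\sin(\mu(l-x))$, which satisfies the condition at $x=l$. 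Imposing the condition at $x=0$ gives a transcendental characteristic equation $F(\mu,\alpha)=0$. Its roots $\mu_k(\alpha)$, $k\in\mathbb{Z}^+$, define $\theta_k(\alpha)=D\mu_k(\alpha)^2$, or $-D\nu_k^2$ for possible imaginary roots. This gives exactly $\lambda_k(\alpha)=-\tfrac{v^2}{4D}-\theta_k(\alpha)$.

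The main obstacle is bookkeeping rather than a conceptual difficulty. We must make sure that the whole spectrum consists of eigenvalues, which follows from compactness of the resolvent. We must also make sure that every eigenvalue is captured by real or purely imaginary $\mu$, and that complex $\theta$ cannot occur. Both points are handled by the self-adjointness established above, which forces $\theta$ to be real. Finally, we need to treat separately the degenerate case $\theta=0$, where $\psi$ is affine, to verify whether it yields an eigenvalue for the given $\alpha$.
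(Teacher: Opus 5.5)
Your proposal is correct and is essentially the paper's argument. The paper makes the same substitution $y=e^{-\frac{v}{2D}x}\xi$ and invokes self-adjointness of the resulting Robin problem for $-Dy''$ in $L^2(0,l)$. That is unitarily equivalent to your symmetry computation for $\mathcal{A}$ in $L^2_\rho$ with $\rho=e^{-vx/D}$, because multiplication by $e^{-\frac{v}{2D}x}$ maps $L^2_\rho$ isometrically onto $L^2(0,l)$; the paper itself uses this weight later through $P$. Your compact-resolvent and similarity-invariance remarks just spell out what the paper delegates to the Sturm--Liouville reference.

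There is one sign slip, which does not affect the lemma since $\theta_k$ is left unspecified there. The function satisfying $\psi'(l)+\frac{v}{2D}\psi(l)=0$ is $\cos(\mu(l-x))+\frac{v}{2D\mu}\sin(\mu(l-x))$. With your minus sign one gets $\psi'(l)+\frac{v}{2D}\psi(l)=\frac{v}{D}\neq0$, so your characteristic equation $F(\mu,\alpha)=0$ must be derived from the corrected $\psi$.
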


\begin{proof}
Consider the eigenvalue problem
\begin{equation}\label{eq:eigen_problem}
\mathcal{A}\xi=\lambda\xi,\qquad \lambda\in\mathbb{C},\ \xi\in D(\mathcal{A}).
\end{equation}
Introduce the transformation $y(x)=e^{-\frac{v}{2D}x}\xi(x)$. A direct computation yields $\mathcal{A}\xi = e^{\frac{v}{2D}x}\bigl(D y'' - \frac{v^2}{4D} y\bigr)$.
Hence, the eigenvalue problem \eqref{eq:eigen_problem} is equivalent to
$-D y'' = \theta y$, $y'(0)=\gamma\,y(0)$ 
and
$y'(l)=-\delta y(l)$,
where $\theta := -\Bigl(\lambda + \frac{v^2}{4D}\Bigr)$, $\gamma=\gamma(\alpha)=\frac{v}{D}\left(\frac12-\alpha\right)$, and $\delta=\frac{v}{2D}>0$.
Thus, we obtain the following Sturm--Liouville eigenvalue problem
\begin{equation}\label{eq:operator_L}
\mathcal{L}y:=- D y'' = \theta y, \qquad y\in D(\mathcal{L}),
\end{equation}
with the operator $\mathcal{L}:D(\mathcal{L})\subset L^2(0,l)\to L^2(0,l)$, where $$D(\mathcal{L})=\left\{y\in H^2(0,l):y'(0)=\gamma\,y(0), y'(l)=-\delta y(l)\right\}.$$ It can be easily verified that operator $\mathcal{L}$ is self-adjoint. Hence, the spectrum of $\mathcal{L}$ is discrete and consists of only real eigenvalues $\theta_k(\alpha)$, $k\in\mathbb{Z}^+$ (see, e.g., \cite[Section XII.2]{DunSch1988}). Therefore, the spectrum of $\mathcal{A}$ is represented by \eqref{eq:eigenvalues}.
\end{proof}

Without loss of generality, we assume that the sequence $\{\theta_k\}$, ${k\in\mathbb{Z}^+}$, is ordered increasingly. This assumption is justified in the proof of the following lemma. In this regard, we note that
$\lambda_0(\alpha)=\max_{k\in\mathbb{Z}^+}\lambda_k(\alpha)$.
We also introduce the critical value $\alpha^*$, which plays an important role in the subsequent spectral analysis:
\begin{equation}\label{eq:alpha*}
\alpha^*:=\tfrac{1}{2}+\tfrac{D}{vl+2D}\in\left(\tfrac{1}{2},1\right).   
\end{equation}

\begin{lemma}\label{lem:dependelne_on_alpha}
For the operator $\mathcal{A}$ defined by \eqref{opA}, the following statements hold:
\begin{itemize}
    \item[$(i)$] The principal eigenvalue $\lambda_0(\alpha)$ is increasing in $\alpha\in\mathbb R$.

    \item[$(ii)$] If $\alpha < \alpha^*$, then $\lambda_0(\alpha)<-\dfrac{v^2}{4D}$.

    \item[$(iii)$] If $\alpha = \alpha^*$, then $\lambda_0(\alpha)=-\dfrac{v^2}{4D}$.

    \item[$(iv)$] If $\alpha^*<\alpha<1$, then $-\dfrac{v^2}{4D}<\lambda_0(\alpha)<0$.
    
    \item[$(v)$] If $\alpha = 1$, then $\lambda_0(\alpha)=0$.
    
    \item[$(vi)$] If $\alpha > 1$, then $\lambda_0(\alpha)>0$.
\end{itemize}
\end{lemma}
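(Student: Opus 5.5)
We reduce everything to the Sturm--Liouville problem \eqref{eq:operator_L} from the proof of Lemma~\ref{lem:spectrum_of_A}. Since $\lambda_0(\alpha)=-\tfrac{v^2}{4D}-\theta_0(\alpha)$, with $\theta_0$ the smallest eigenvalue of $\mathcal{L}$, the statements about $\lambda_0$ become statements about $\theta_0(\alpha)$, where only $\gamma=\frac{v}{D}(\frac12-\alpha)$ depends on $\alpha$. The tool is the Rayleigh quotient of the self-adjoint operator $\mathcal{L}$:
\begin{equation*}
\theta_0(\alpha)=\min_{0\neq y\in H^1(0,l)}\frac{D\int_0^l|y'|^2\,dx+D\delta|y(l)|^2+D\gamma|y(0)|^2}{\int_0^l|y|^2\,dx}.
\end{equation*}
This follows by integrating $-Dy''\bar y$ by parts and using $y'(0)=\gamma y(0)$, $y'(l)=-\delta y(l)$. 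It also justifies the increasing ordering of the $\theta_k$: $\mathcal{L}$ is bounded below with compact resolvent.

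\textbf{Step (i).} Since $\gamma$ is strictly decreasing in $\alpha$, the quadratic form is pointwise nonincreasing in $\alpha$, so $\theta_0$ is nonincreasing. For strictness, take the minimizer $y_0$ at parameter $\alpha_2$ as a test function at $\alpha_1<\alpha_2$. We have $y_0(0)\ne0$: otherwise $y_0'(0)=\gamma y_0(0)=0$ as well, and the ODE would force $y_0\equiv0$. Hence $\theta_0(\alpha_1)>\theta_0(\alpha_2)$, so $\lambda_0$ is strictly increasing.

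\textbf{Steps (ii)--(iv): the threshold $\theta_0=0$.} We determine when $\theta=0$ is an eigenvalue. For $\theta=0$ the solution is $y=a+bx$. The condition $y'(l)=-\delta y(l)$ gives $b=-\delta(a+bl)$, and $y'(0)=\gamma y(0)$ gives $b=\gamma a$. Solving, $\gamma(1+\delta l)=-\delta$, i.e.
\begin{equation*}
\tfrac{v}{D}\bigl(\tfrac12-\alpha\bigr)=-\tfrac{v}{2D+vl}.
\end{equation*}
This is exactly $\alpha=\alpha^*$. At $\alpha^*$ this eigenfunction is affine and must have no zero in $[0,l]$. Indeed, $y=a(1+\gamma x)$ with $\gamma=-\delta/(1+\delta l)$, so $1+\gamma x\ge 1-\delta l/(1+\delta l)>0$ on $[0,l]$. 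A nonvanishing eigenfunction is the principal one, so $\theta_0(\alpha^*)=0$, which gives (iii).

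For $\alpha<\alpha^*$ we have $\gamma>\gamma(\alpha^*)$, and strict monotonicity gives $\theta_0>0$, hence $\lambda_0<-\tfrac{v^2}{4D}$, which is (ii). For $\alpha>\alpha^*$ we get $\theta_0<0$, hence $\lambda_0>-\tfrac{v^2}{4D}$. The upper bound in (iv) comes from the next step.

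\textbf{Steps (iv)--(vi): the threshold $\lambda_0=0$.} It is simplest to work with $\mathcal{A}$ directly. At $\alpha=1$ the boundary conditions are $\xi'(0)=0=\xi'(l)$, and $\xi\equiv1$ satisfies $\mathcal{A}\xi=0$. Transformed, $y=e^{-vx/(2D)}$ is positive, so it is the principal eigenfunction, and $\lambda_0(1)=0$; this is (v). Strict monotonicity from (i) then gives $\lambda_0<0$ for $\alpha<1$ and $\lambda_0>0$ for $\alpha>1$, completing (iv) and (vi).

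\textbf{Main obstacle.} The delicate point is identifying a \emph{principal} eigenvalue from an explicit eigenfunction. I would invoke the Sturm oscillation theorem: the $k$-th eigenfunction has exactly $k$ zeros. This holds for separated Robin conditions with any sign of $\gamma$. The other point is strict monotonicity, which relies on $y_0(0)\neq0$.
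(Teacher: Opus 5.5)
Your strictness argument in (i) runs in the wrong direction. Plugging the minimizer $y_0$ for $\alpha_2$ into the Rayleigh quotient at $\alpha_1<\alpha_2$ only gives $\theta_0(\alpha_1)\le\theta_0(\alpha_2)+D\bigl(\gamma(\alpha_1)-\gamma(\alpha_2)\bigr)|y_0(0)|^2/\|y_0\|^2$. That is an upper bound on $\theta_0(\alpha_1)$ lying above $\theta_0(\alpha_2)$, so nothing follows. Swap the roles: take the minimizer $y_1$ at $\alpha_1$ and test it at $\alpha_2$. This gives
\[
\theta_0(\alpha_2)\le\theta_0(\alpha_1)-D\bigl(\gamma(\alpha_1)-\gamma(\alpha_2)\bigr)\frac{|y_1(0)|^2}{\|y_1\|^2}<\theta_0(\alpha_1),
\]
with $y_1(0)\neq0$ by exactly your ODE-uniqueness argument; here $\|\cdot\|$ is the $L^2(0,l)$ norm. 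The fix matters, because in your proof strict monotonicity is what yields (ii), the upper bound in (iv), and (vi). It is, however, a one-line change.

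With that fixed, your proof is correct and takes a leaner route than the paper's. Both arguments start from the Rayleigh quotient and both locate $\alpha^*$ from the affine solution at $\theta=0$. Your $+D\delta|y(l)|^2$ is the right sign; the paper's display has $-D\delta|y(l)|^2$, which does not affect monotonicity in $\gamma$.

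The paper then works with the explicit transcendental equations:
\begin{itemize}
\item the $\theta<0$ case through $e^{2ql}=R(q)$ and the condition $S'(0)<0$;
\item the $\theta>0$ case through \eqref{eq:for_s};
\item $\alpha=1$ by solving the boundary system, which yields the single negative $\theta=-D\delta^2$.
\end{itemize}
After this it asserts that (iv) and (vi) follow.

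You bypass all of this. You identify the principal eigenvalue at the two thresholds by a nonvanishing eigenfunction, $a(1+\gamma x)$ at $\alpha^*$ and $e^{-vx/(2D)}$ at $\alpha=1$, together with Sturm oscillation, and let strict monotonicity do the rest. Your route is more complete. The paper only shows $S'(0)<0$ to be sufficient for a negative eigenvalue, so on its own it neither excludes $\theta_0\le0$ for $\tfrac12<\alpha<\alpha^*$ nor shows that $0$ is the lowest eigenvalue at $\alpha^*$. Its non-strict monotonicity also would not deliver the strict inequalities in (iv) and (vi).

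What the paper's computation buys is an explicit characterization of $\theta_0$ through the roots of \eqref{eq:R(q)} and \eqref{eq:for_s}. Remark~\ref{rem3} uses this to evaluate the decay rate.
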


\begin{proof}
Using the standard variational characterization of the principal eigenvalue of a self-adjoint Sturm--Liouville operator $\mathcal{L}$ by means of the Rayleigh quotient (see, e.g., \cite{Zet2005}), we get $\theta_0(\alpha)=\min_{0\neq y\in H^2(0,l)}\frac{\langle\mathcal{L}y,y\rangle}{\langle y,y\rangle}$, where $\langle\cdot,\cdot\rangle$ denotes the standard inner product in $L^2(0,l)$. This implies:
\begin{equation*}
\theta_0(\alpha)=\min_{\substack{y\in H^2(0,l)\\ y\neq 0}}\frac{D\int_0^l |y'|^2\,dx - D\delta |y(l)|^2 + D\gamma(\alpha)|y(0)|^2}
{\int_0^l |y|^2\,dx}.
\end{equation*}
We see that $\theta_0(\alpha)$ is increasing in $\gamma(\alpha)$, hence decreasing in $\alpha$.
Therefore, $\lambda_0(\alpha) = -\frac{v^2}{4D} - \theta_0(\alpha)$ is increasing in $\alpha$, which proves $(i)$.

Now, we focus on finding the eigenvalues of the operator $\mathcal{L}$, defined in \eqref{eq:operator_L}. Consider three cases.
\begin{enumerate}
    \item[1)] $\theta=0$, meaning that $\lambda=-\frac{v^2}{2D}$.
\end{enumerate}
Then, the solutions of \eqref{eq:operator_L} take the form
$y(x)=C_1x+C_2,$ 
for all $x \in (0,l)$,
for constants $C_1,\,C_2\in\mathbb{R}$,
and the boundary conditions imply:
$C_1-\gamma(\alpha)C_2=0$ and
$(1+\delta l)C_1+\delta C_2=0$.
The corresponding determinant condition gives $\gamma(\alpha)=-\frac{\delta}{1+\delta l}$ which implies $\alpha=\alpha^*$. Hence, the eigenvalue $\lambda=-\frac{v^2}{2D}$ belongs to the spectrum of the operator $\mathcal{A}$ only in the case $\alpha=\alpha^*$, where $\alpha^*$ is defined in \eqref{eq:alpha*}.

\begin{enumerate}
    \item[2)] $\theta<0$. Denote $\theta=-Dq^2$ with some parameter $q>0$.
\end{enumerate}
 In this case, the solutions of the eigenvalue problem \eqref{eq:operator_L} have the following form: 
$
y(x)=C_1e^{qx}+C_2e^{-qx}$, for all 
$x \in (0,l)$ with $C_1,\,C_2\in\mathbb{R}$,
and the boundary conditions yield:
\begin{equation}\label{eq:BC_nu<0}
(q-\gamma)C_1-(q+\gamma)C_2= 
(q+\delta)e^{2ql}C_1-(q-\delta)C_2=0.
\end{equation}
The corresponding determinant condition can be written as
\begin{equation}\label{eq:R(q)}
e^{2ql}=\frac{(q-\gamma)(q-\delta)}{(q+\gamma)(q+\delta)}=:R(q), \qquad q>0.
\end{equation}
At this stage, we assume that $q\neq -\gamma$ when $\gamma<0$, since this would imply
$\delta=-\gamma$, that is, $\alpha=1$. The case $\alpha=1$ will be treated separately below.

Now we analyze equation \eqref{eq:R(q)}. First, we note that $e^{2ql}>1$ since $q>0$. A standard analysis shows that $R(q)>1$ only if $\gamma<0$ $\left(\alpha>\frac{1}{2}\right)$ and $0<q<-\gamma$. Moreover, under the condition $\gamma<0$, the function $S(q):=R(q)-e^{2ql}$ satisfies $S(0)=0$ and $S(q)\to+\infty$ as $q\to-\gamma$. This means that a sufficient condition for the existence of roots $S(q)=0$ on the interval $q\in(0,-\gamma)$ is $S'(0)<0$, which leads to the condition $\alpha>\alpha^*$ with $\alpha^*$ from \eqref{eq:alpha*}.
%
%
This proves $(ii)$ and $(iii)$.

\begin{enumerate}
    \item[3)] $\theta>0$. Denote $\theta=Ds^2$ with some parameter $s>0$.
\end{enumerate}
 In this case, the solutions of the eigenvalue problem \eqref{eq:eigen_problem} have the following form:
$y(x)=C_1\cos(sx)+C_2\sin(sx)$,
for all $x \in (0,l)$. 
The determinant condition for the 
boundary conditions lead to 
\begin{equation}\label{eq:for_s}
(s^2-\delta\gamma(\alpha))\tan(sl)=s(\gamma(\alpha)+\delta),\qquad s>0.
\end{equation}

\begin{enumerate}
    \item[4)] $\alpha=1$. In this case, $\gamma=-\delta$.
\end{enumerate}
The boundary conditions \eqref{eq:BC_nu<0} translate into:
$
(q+\delta)C_1=0$, $(q+\delta)e^{ql}C_1-(q-\delta)e^{-ql}C_2=0$,
which leads to the only positive eigenvalue $\theta=-D\delta^2=-\frac{v^2}{4D}$ for operator $\mathcal{L}$, which proves $(v)$.

Finally, statements $(iv)$ and $(vi)$ follow from the others.
\end{proof}





\subsection{Proof of Lemma \ref{lem:steady-state}}\label{sec:steady-state}
We rewrite problem \eqref{eq:steady-state} in an abstract form for $y=C_A-\bar{u}$:
\begin{equation}\label{eq:steady-state_abstract}
 \mathcal{B}\,{y}(t)=f(y).
\end{equation}
Here, the linear operator $\mathcal{B}=\mathcal{A}|_{\alpha=0}$ with $\mathcal{A}$ defined by \eqref{opA}, and the nonlinear operator $f:Y_0^{\bar{u}}\subset X\to X$ is given by 
\begin{equation*}
\begin{aligned}
f({y})&=k (y+\bar{u})^n, \qquad y\in Y_0^{\bar{u}},
\\Y_0^{\bar{u}} &= \{y\in X \,|\, 0\leqslant y(x)+\bar{u}\leqslant \bar{u}\;\text{ a.e. in }(0,l)\}.
\end{aligned}
\end{equation*}
We note that by \cite[Section~4.1]{CurZwa2020}, operator $\mathcal{B}$ is invertible, and, by Lemma~\ref{lem:dependelne_on_alpha}, it holds 
$\|\mathcal{B}^{-1}\|\leq\frac{1}{-\lambda_0(0)}<\frac{4D}{v^2}.$ 
Following the same arguments as in Lemma~\ref{lem:invariance}, we can establish the Lipschitz condition for the nonlinear operator $f$ with the Lipschitz constant $L_f=kn\bar{u}^{n-1}$.

Any solution of \eqref{eq:steady-state_abstract} can be represented as a fixed point of operator $\Phi(y):=\mathcal{B}^{-1}f(y)$. For all $y_1,y_2\in X$, we have
$$
\|\Phi(y_1)-\Phi(y_2)\|_X\leq\|\mathcal{B}^{-1}\|\,L_f\,\|y_1-y_2\|_X.
$$
By the Banach fixed-point theorem, $\Phi$ has a unique fixed point, if $\|\mathcal{B}^{-1}\|\,L_f<1$, that is $\bar{u}\le\left(\frac{v^2}{4Dkn}\right)^\frac{1}{n-1}$.

\subsection{Proof of Proposition \ref{prop1}}\label{sec:proof}

Since $M<M^*$, we get $M\varphi(x)<\left(\frac{v^2}{4Dkn}\right)^\frac{1}{n-1}$. This fulfills the requirement of Lemma~\ref{lem:steady-state} and ensures the existence and uniqueness of the steady-state solution $\overline{C}_A(x)$.

Introduce the following multiplication operator $P:X\to X$:
\begin{equation}\label{eq:operator_P}
(P\xi)(x) = e^{-\frac{v}{D}x}\xi(x),\qquad x\in[0,l].
\end{equation}
Recall that the inner product in $X=L^2(0,l)$ is defined by $\langle \xi,\eta\rangle_X=\int_0^l \xi(x)\eta(x)\,dx$ for any $\xi,\eta\in X$. Define also the weighted inner product w.r.t. operator $P$:
\begin{equation*}
\langle \xi, \eta\rangle_P := \langle P\xi, \eta\rangle_{X} = \int_0^l e^{-\frac{v}{D}x}\xi(x)\eta(x)\,dx.
\end{equation*}
Now we prove that $\mathcal{A}$ is self-adjoint with respect to $\langle\cdot,\cdot\rangle_P$. Indeed, for any $\xi,\eta\in D(\mathcal{A})$, we have
\begin{equation*}
\begin{aligned}
\langle \mathcal{A}\xi,\eta\rangle_P &= \int_0^l (D\xi'' -v\xi' )e^{-\frac{v}{D}x}\eta \,dx
\\&=\left[D\xi'  e^{-\frac{v}{D}x}\eta \right]_0^l 
-\int_0^l D\xi' \left(-\frac{v}{D}e^{-\frac{v}{D}x}\eta  + e^{-\frac{v}{D}x}\eta' \right)dx 
-\int_0^l v\xi' e^{-\frac{v}{D}x}\eta \,dx
\\&=-D\xi'(0)\eta(0)-\int_0^l D\xi' e^{-\frac{v}{D}x}\eta' dx
\\&=-v(1-\alpha)\xi(0)\eta(0)
-\left[D\xi e^{-\frac{v}{D}x}\eta' \right]_0^l
+\int_0^l D\xi \left(-\frac{v}{D}e^{-\frac{v}{D}x}\eta'  + e^{-\frac{v}{D}x}\eta'' \right)dx
\\&=-v(1-\alpha)\xi(0)\eta(0)+D\xi(0)\eta'(0)
+\int_0^l e^{-\frac{v}{D}x}\xi \left(-v\eta'  +D\eta'' \right)dx
=\langle \xi,\mathcal{A}\eta\rangle_P
\end{aligned}
\end{equation*}
%
%
By \cite{DelDocWin2003}, the operator $-\mathcal{A}$ is a Sturm--Liouville operator and, consequently, a Riesz-spectral operator. Denote by $\{\phi_k\}_{k=0}^{\infty}$ and $\{\psi_k\}_{k=0}^{\infty}$ the sets of eigenfunctions of operator $\mathcal{A}$ and $\mathcal{A}^*$ respectively. Now any $\xi \in D(\mathcal{A})$ can be expanded in the following way:
\begin{equation*}
\xi = \sum_{k=0}^{\infty} c_k\,\phi_k, \qquad \text{where } c_k:=\langle \xi, \psi_k\rangle_{X}.
\end{equation*}
Moreover, since $\mathcal{A}$ is self-adjoint w.r.t. $\langle\cdot,\cdot\rangle_P$, its eigenfunctions are orthogonal in this inner product. Indeed,
\begin{equation*}
\lambda_k\langle\phi_k,\phi_m\rangle_P = \langle \mathcal{A}\phi_k,\phi_m\rangle_P = \langle\phi_k,\mathcal{A}\phi_m\rangle_P = \lambda_m\langle\phi_k,\phi_m\rangle_P.
\end{equation*}
This implies $(\lambda_k - \lambda_m)\langle\phi_k,\phi_m\rangle_P = 0$,
which leads to $\langle \phi_k, \phi_m\rangle_P = 0$ for  $k\neq m$.
Computing $\langle \mathcal{A}\xi, P\xi\rangle_{X}$, we obtain:
\begin{equation}\label{eq:aux_est_for_Lyapunov}
\begin{aligned}
\langle \mathcal{A}\xi, P\xi\rangle_{X}
&= \langle \mathcal{A}\xi, \xi\rangle_P 
=\biggr\langle \mathcal{A}\sum_{k=0}^\infty c_k\phi_k,\ \sum_{m=0}^\infty c_m\phi_m\biggr\rangle_P 
= \biggr\langle \sum_{k=0}^\infty c_k\lambda_k\phi_k,\ \sum_{m=0}^\infty c_m\phi_m\biggr\rangle_P
\\&= \sum_{k,m=0}^\infty c_k c_m \lambda_k \langle\phi_k,\phi_m\rangle_P 
\leq \lambda_0\sum_{k=0}^{\infty} c_k^2\|\phi_k\|_P^2 
=\lambda_0\langle\xi,\xi\rangle_P = \lambda_0\langle P\xi,\xi\rangle_{X}.
\end{aligned}
\end{equation}
Introduce the Lyapunov functional $V(\xi)=\langle P\xi,\xi\rangle_{X}$, $\xi\in X$, with the operator $P$ defined by \eqref{eq:operator_P}. Differentiating $V$ with respect to time along the solutions of the nonlinear system~\eqref{eq1.5}, and using inequality \eqref{eq:aux_est_for_Lyapunov}, we get:
\begin{equation}\label{eq:Lyapunov_derivative}
\begin{aligned}
\dot{V}(\xi)&
\leq 2\lambda_0\langle P\xi,\xi\rangle_{X}+2\langle\xi,r(\xi)\rangle_P.
\end{aligned}
\end{equation}
For $r$ defined by \eqref{eq3.6}, using the fact that $(a-b)(a^n-b^n)\ge0$ for any positive numbers $a,b,n$, we obtain for any $\xi\in X_0^M$
\begin{equation*}
\begin{aligned}
\langle\xi,r(\xi)\rangle_P&=k\int_0^l e^{-\frac{v}{D}x}\xi\left(\overline{C}_A^n- (\xi+\overline{C}_A)^n\right)dx
\\&=-k\int_0^le^{-\frac{v}{D}x}\left(\overline{C}_A- (\xi+\overline{C}_A)\right)\left(\overline{C}_A^n- (\xi+\overline{C}_A)^n\right)dx\le0.
\end{aligned}
\end{equation*}
Thus, we derive from \eqref{eq:Lyapunov_derivative}: 
\begin{equation*}
\dot{V}(\xi)\le 2\lambda_0 V(\xi),
\qquad \xi\in D(\mathcal{A})\cap X_0^M,
\end{equation*}
which leads to the following exponential estimate:
\begin{equation*}
V(\xi(t))\le \langle P\xi_0,\xi_0\rangle_{X} e^{2\lambda_0 t},
\qquad \xi_0\in D(\mathcal{A})\cap X_0.
\end{equation*}
Since $e^{-\frac{vl}{D}}\leq e^{-\frac{v}{D}x} \leq 1$ for all $x\in[0,l]$, it is obvious, that $e^{-\frac{vl}{D}} \|\xi\|_X^2 \le \langle P\xi,\xi\rangle_{X} \le \|\xi\|_X^2$ for all $\xi\in X$.
Finally, we get
\begin{equation*}
\|\xi(t)\|_X
\le K e^{\lambda_0t}\|\xi_0\|_X
\quad \text{for all}\; \xi_0\in X_0,\; t\ge 0.
\end{equation*}
This leads to \eqref{eq3.8} with constant $K=e^{\frac{vl}{2D}}$.

\begin{remark}\label{rem3}
The precise value of decay rate $\omega$ from exponential estimate \eqref{eq:exp_estimate_for_C} is given by:
\begin{equation*}
    \omega=-\lambda_0(\alpha)=\frac{v^2}{4D}+\theta,\quad \theta=
    \begin{cases}
    -Dq^2, &\text{if }\alpha < \alpha^*,\\
    0, &\text{if }\alpha = \alpha^*,\\
    Ds^2, &\text{if }\alpha^*<\alpha <1,
    \end{cases}
\end{equation*}
where $\alpha^*$ is defined in \eqref{eq:alpha*}, $q>0$ is the maximal positive root of equation \eqref{eq:R(q)}, and $s>0$ is the minimal positive root of equation \eqref{eq:for_s}.
\end{remark}

\section{Simulation results}\label{sec:num}

For the numerical simulations, we choose the following parameters of the control system~\eqref{eq1.5}:
$k=0.001\,\tfrac{1}{s\cdot mol}$, $v=0.01\,\tfrac{m}{s}$,
$l=1\,m$, $D=0.0025\,\tfrac{m^2}{s}$, $n=2$.
To analyze the dynamical behavior for different choices of the control gain $\alpha$ in~\eqref{control_1} within the range { $\alpha<1$}, we define the initial function $\xi_0$ at $t=0$ as
\begin{equation}\label{w0fun}
\begin{aligned}
\xi_0(x)&=\mu M^* \varphi(x)= \mu M^*\left(l^2-(x-l)^2+\tfrac{2Dl}{v(1-\alpha)}\right),
\end{aligned}
\end{equation}
{ with some $\mu\in(0,1)$, $M^*$ defined by \eqref{eq:M*}, and $\varphi$ defined by \eqref{eq:phi}}. 
It is easy to verify that the chosen initial function $\xi_0$ satisfies the boundary conditions in $D(\mathcal A)$.
{ In the subsequent simulations, we choose  $\mu=0.9$ to guarantee that $\xi_0\in X_0^M$.}

Numerical solutions of system~\eqref{eq1.5} are computed in \texttt{MATLAB} R2025a using the \texttt{pdepe} function.
A preliminary step in this process requires defining the steady-state profile $\bar C_A(x)$ for each considered value of $\alpha$,
which is implemented via the numerical solution of the corresponding time-invariant boundary-value problem using \texttt{bvp4c}.

The resulting time-plots of the spatial $L^2$-norms of the numerical solutions are presented in Fig.~1 for different values of $\alpha$ within the range from
$\alpha_{\min}=-10$ to $\alpha_{\max}=0.95$. For convenience, the norms are rescaled to have the common initial value $1$ for all functions $\|\xi(t)\|_{L^2(0,1)}/\|\xi_0\|_{L^2(0,1)}$ at $t=0$.
These simulations clearly indicate the exponential decay of all solutions for large values of $t$.

\begin{figure}[h]\label{decay_multipleplots}
\begin{center}
\includegraphics[width=0.8\linewidth]{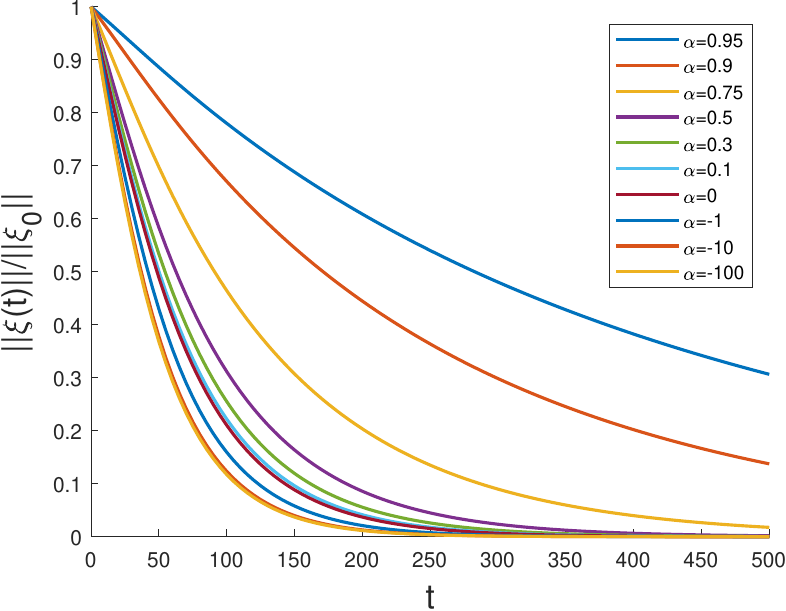}
\end{center}
\vskip-1ex
\caption{Norms of the solutions for different values of $\alpha$.}
\end{figure}

\begin{figure}[h]\label{decay_multipleplots}
\begin{center}
\includegraphics[width=0.8\linewidth]{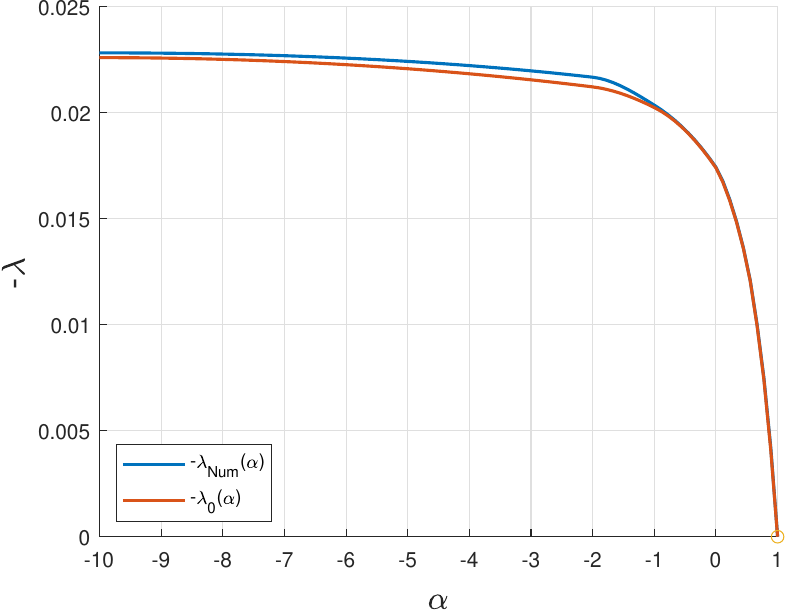}
\end{center}
\vskip-1ex
\caption{Norms of the solutions for different values of $\alpha$.}
\end{figure}

We also summarize the decay rate estimates for the considered closed-loop system in Table~\ref{tab1} for the indicated choices of $\alpha$,
where $\lambda_0$ is the principal eigenvalue of the linear operator $\mathcal A${, calculated by the formulas provided in Remark~\ref{rem3},} and $\lambda_{Num}$ is the numerical estimate of the Lyapunov exponent $\limsup_{t+\infty} \left( \frac{\ln\|\xi(t)\|_{L^2(0,1)}}{t} \right)$ for the solution $\xi(t)$ corresponding to the initial data~\eqref{w0fun}.
These results show good agreement between $\lambda_{Num}$ and $\lambda_0$.

\begin{table}[h!]
\centering
{\scriptsize
\begin{tabular}{|c|c|c|c|c|c|c|}
\hline
 & $\alpha=-10$ & $\alpha=-1$ & $\alpha=0$ & $\alpha=0.5$ & $\alpha=0.75$ & $\alpha=0.9$ \\ 
\hline
$-\lambda_{Num}$ & 0.0228
& 0.0204
 & 0.0174
 & 0.0129
 & 0.0082
 & 0.0038
 \\ 
\hline
$-\lambda_0$ & 0.0226
 & 0.0202
 & 0.0174
 & 0.0129
 & 0.0081
 & 0.0037
 \\ 
 \hline
\end{tabular}}
\caption{Decay rate estimates.}\label{tab1}
\end{table}

Fig.~2 represents the plots of $-\lambda_{Num}(\alpha)$ and $-\lambda_0(\alpha)$. 
As established in Lemma~\ref{lem:dependelne_on_alpha}, $-\lambda_0(\alpha)$ is strictly decreasing on $\alpha\in (-\infty,1)$
and vanishes at $\alpha=1$, and similar behavior is also observed for $-\lambda_{Num}(\alpha)$. 

It should be emphasized that there is {\em no optimal decay rate} in the sense that none of the considered decay characteristics ($\lambda_0(\alpha)$, $\lambda_{Num}(\alpha)$) has a global minimum over the range of the tuning parameter $\alpha\in (-\infty,1)$. Therefore, {\em any decrease in $\alpha$ improves the convergence rate}. However, our analysis shows that all these decay characteristics are bounded, with $\lambda_{Num}(\alpha)$ approaching its infimum $\lambda_0^*\approx -0.0234$ as $\alpha\to-\infty$.

\section*{Conclusion}\label{sec:con}

A key outcome of our study is the theoretical and computational justification of the impact of the gain parameter $\alpha$ in the boundary control of the parabolic nonlinear reaction model, which can play either a {\em stabilizing role} (for $\alpha<1$) or a {\em destabilizing role} (for $\alpha>1$).
The theoretical decay estimates obtained in Theorem~\ref{th2} and Proposition~\ref{prop1}, together  with the performed numerical analysis, confirm that the principal eigenvalue of the linearized system captures the asymptotic behavior of the considered nonlinear system in the sense of the exponential decay rate.

An important feature of our study is the consideration of the closed-loop system in an invariant set that contains an open neighborhood in the $L^\infty$-topology, but does not contain any neighborhood in the $L^2$-space. This framework differs from classical analysis in invariant sets defined via level sets of a Lyapunov functional. 
The study of the system behavior at $\alpha=1$, where the linear system has a zero principal eigenvalue, is also considered as a topic for further investigation for the nonlinear system under different reaction orders $n$, possibly integrating the theory of critical cases in the sense of Lyapunov with the theory of infinite-dimensional dynamical systems.

For realistic applications, future work will focus on developing a spectral analysis framework for a more complex model consisting of coupled parabolic PDEs governing the mass and energy balances  (cf.~\cite{SH13}). The nonlinearities arising from the Arrhenius law are expected to introduce additional theoretical and computational challenges, providing a promising direction for future research.

\end{document}